\theoremstyle{plain}
\newtheorem{theorem}{Theorem}[section]
\newtheorem{corollary}[theorem]{Corollary}
\newtheorem{lemma}[theorem]{Lemma}
\theoremstyle{definition}
\newtheorem{definition}[theorem]{Definition}
\newtheorem{remark}[theorem]{Remark}
\newtheorem{example}[theorem]{Example}
\newcommand{\Q}{\mathbb{Q}}
\newcommand{\R}{\mathbb{R}}
\newcommand{\Hom}{\operatorname{Hom}} 
\newcommand{\ord}{\operatorname{\mathcal{O}}} 
\title{What is $-Q$ for a poset $Q$?} 
\author{Taiga Yoshida\thanks{Department of Mathematics, Graduate School of Science, 
Hokkaido University, North 10, West 8, Kita-ku, 
Sapporo 060-0810, JAPAN 
E-mail: yda.tga@gmail.com}, 
Masahiko Yoshinaga\thanks{Department of Mathematics, Faculty of Science, 
Hokkaido University, North 10, West 8, Kita-ku, 
Sapporo 060-0810, JAPAN 
E-mail: yoshinaga@math.sci.hokudai.ac.jp}}
\date{\today}
\begin{document}
\maketitle

\begin{abstract} 
In the context of combinatorial reciprocity, 
it is a natural question to ask what ``$-Q$'' is for a poset $Q$. 
In a previous work, the definition 
``$-Q:=Q\times\mathbb{R}$ with lexicographic order'' 
was proposed based on the notion of Euler characteristic of 
semialgebraic sets. In fact, by using this definition, 
Stanley's reciprocity for order polynomials was 
generalized to 
an equality for the Euler characteristics of certain spaces of 
increasing maps between posets. 
The purpose of this paper is to refine this result, that is, 
to show that these spaces are homeomorphic if the topology of $Q$ is 
metrizable. 
\end{abstract}


\section{Introduction: Euler characteristic reciprocity}
\label{sec:intro}

For posets $P$ and $Q$, the set of increasing maps from $P$ to $Q$, 
denoted by $\Hom^{<}(P, Q)$, is defined as 
\begin{equation}
\Hom^{<}(P, Q)=\{\eta:P\longrightarrow Q\mid 
p_1<p_2\Longrightarrow \eta(p_1)<\eta(p_2)\}. 
\end{equation}
The set of weakly increasing maps $\Hom^{\leq}(P, Q)$ is similarly defined. 
For finite posets $P$ and $Q$, the cardinality 
$\left|\Hom^{<(\leq)}(P, Q)\right|$ 
is an important object of study in enumerative combinatorics and 
theory of polytopes (\cite{sta-ec}). 
In particular, the following result by Stanley 
is one of the early results which leads 
recent active research 
on combinatorial reciprocities (\cite{bec-san}).
\begin{theorem}
\label{thm:sta}
\cite{sta-chr, sta-ord} 
Let $P$ be a finite poset and 
$[n]$ denote the totally ordered set $\{1<2<\dots <n\}$. Then, 
\begin{itemize}
\item[(i)] (Order polynomials) there exist polynomials 
$\ord^<(P, t), \ord^\leq(P, t)\in\Q[t]$ that satisfy 
\begin{eqnarray}
\ord^\leq(P, n)&=&\left|\Hom^\leq(P, [n])\right|,\\
\ord^<(P, n)&=&\left|\Hom^<(P, [n])\right|, 
\end{eqnarray}
for $n\geq 1$. 
\item[(ii)] (Reciprocity) 
\begin{equation}
\label{recip}
\ord^<(P, t)=(-1)^{|P|}\cdot\ord^\leq(P, -t). 
\end{equation}
\end{itemize}
\end{theorem}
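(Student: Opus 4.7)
The approach I would take is the classical theory of $P$-partitions. Fix a natural labeling of $P$, that is, a bijection $P \cong \{1, \dots, p\}$ with $p = |P|$ such that $i <_P j$ implies $i < j$. The key combinatorial input is a lex-ordering decomposition: for each $\eta \in \Hom^\leq(P, [n])$, let $\sigma = \sigma(\eta)$ be the unique permutation of $\{1, \dots, p\}$ such that the pairs $\bigl(\eta(\sigma(i)), \sigma(i)\bigr)$ are lexicographically strictly increasing in $i$. A direct check shows that $\sigma$ must be a linear extension of $P$, yielding a partition
$$
\Hom^\leq(P, [n]) = \bigsqcup_{\sigma \in \L(P)} B_\sigma(n),
$$
where $B_\sigma(n) = \{\eta : \eta(\sigma(1)) \le \cdots \le \eta(\sigma(p)),\ \text{strict at each descent of } \sigma\}$.

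To count $|B_\sigma(n)|$, use the telescoping substitution $b_i = \eta(\sigma(i)) - |\{j < i : \sigma(j) > \sigma(j+1)\}|$, which puts the constrained sequences in bijection with weakly increasing sequences of length $p$ in $\{1,\dots, n - d(\sigma)\}$, where $d(\sigma)$ denotes the number of descents of $\sigma$. Thus $|B_\sigma(n)| = \binom{n - d(\sigma) + p - 1}{p}$, a polynomial of degree $p$ in $n$, and summing yields
$$
\ord^\leq(P, t) = \sum_{\sigma \in \L(P)} \binom{t - d(\sigma) + p - 1}{p} \in \Q[t].
$$
Running the same argument with the \emph{strict} labeling (the reverse of the natural one) produces the analogous polynomial $\ord^<(P, t) \in \Q[t]$ with $\ord^<(P, n) = |\Hom^<(P, [n])|$ for $n \ge 1$. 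Under the reverse-labeling bijection on $\L(P)$ the descent statistic $d'(\sigma)$ in the strict labeling corresponds to $p - 1 - d(\sigma)$ in the natural labeling, so after relabeling one obtains
$$
\ord^<(P, t) = \sum_{\sigma \in \L(P)} \binom{t + d(\sigma)}{p}.
$$

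For (ii), apply the elementary identity $\binom{-u}{p} = (-1)^p \binom{u + p - 1}{p}$ with $u = t + d(\sigma) - p + 1$ to each summand of $\ord^\leq(P, -t)$, obtaining
$$
(-1)^p \, \ord^\leq(P, -t) = \sum_{\sigma \in \L(P)} \binom{t + d(\sigma)}{p} = \ord^<(P, t),
$$
which is the desired reciprocity. The main obstacle in carrying out this plan is the bookkeeping: the two lex-decompositions (for $\Hom^\leq$ and $\Hom^<$) use different labeling conventions, and one must verify that the change of labeling translates the strict-labeling descent statistic precisely into $p - 1 - d(\sigma)$ in the natural labeling. Once this is confirmed, both polynomial formulas share the same descent statistic and the reciprocity reduces to a one-line binomial identity.
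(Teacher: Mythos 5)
The paper does not prove Theorem~\ref{thm:sta}; it is quoted as a known background result of Stanley, with the references \cite{sta-chr, sta-ord} standing in for the proof. So there is no ``paper's own proof'' to compare against, and the relevant question is whether your argument is a correct rendition of the classical one. It is. Your lexicographic decomposition of $\Hom^\leq(P,[n])$ over the set $\L(P)$ of linear extensions, with weak inequality at ascents of $\sigma$ and strict inequality at descents, is exactly Stanley's $(P,\omega)$-partition argument, the telescoping shift giving $\lvert B_\sigma(n)\rvert = \binom{n-d(\sigma)+p-1}{p}$ is correct, and the final binomial identity $\binom{-u}{p}=(-1)^p\binom{u+p-1}{p}$ delivers the reciprocity.

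One point worth making precise, which you flag yourself as ``bookkeeping'': rather than speaking of a separate reverse labeling, it is cleaner to say that for $\eta\in\Hom^<(P,[n])$ you sort the pairs $(\eta(i),i)$ lexicographically \emph{with ties broken by larger label first}. One then checks directly that the resulting $\sigma$ is still a linear extension (because $i<_P j$ forces $\eta(i)<\eta(j)$, so order relations never produce ties), that the cell $B'_\sigma(n)$ has strict inequalities at ascents and weak inequalities at descents, and hence that $\lvert B'_\sigma(n)\rvert=\binom{n-(p-1-d(\sigma))+p-1}{p}=\binom{n+d(\sigma)}{p}$. This replaces the relabeling-and-descent-complementation step with a direct verification and removes the only place where your sketch could trip up. With that filled in, the proof is complete and agrees with the standard one in Stanley's cited works.
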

Let $t=n$ in formula (\ref{recip}). The left-hand side 
makes sense in terms of the cardinality of $\Hom^<(P, [n])$. 
However, the right-hand side, the cardinality of $\Hom^{\leq}(P, [-n])$, 
is meaningless as it is. 
For this reason, it is a natural question to give a definition of 
``$-Q$'' for the poset $Q$ and give meaning to the formula of 
the form 
\begin{equation}
\label{expect}
\text{`` }
\#\Hom^<(P, Q)=(-1)^{|P|}\cdot\#\Hom^\leq(P, -Q). 
\text{ ''}
\end{equation}
The cardinality of a finite set is a non-negative integer, 
however for our purposes we need an extension of ``finite sets'' 
such that it takes whole integers (including negative integers) 
as ``cardinality''. Such a problem has been discussed in \cite{sch-neg}, 
and one natural answer is topological spaces 
(in particular, semialgebraic sets) and their Euler characteristics. 
In fact, number of generalizations of combinatorial results have been 
obtained using the Euler characteristic \cite{eas-hug, str-eul}. 

In \cite{hmy}, the definition ``$-Q:=Q\times\R$ with lexicographic order'' was 
proposed for this purpose. 
Then, based on this definition, the above formula (\ref{expect}) 
can be formulated as an identity for the Euler characteristics. 

In order to state the main result of \cite{hmy}, 
let us recall the Euler characteristic of a semialgebraic set \cite{bpr}. 
Let $X\subset\R^N$ be a semialgebraic set. Then, there exists a 
finite partition $X=\bigsqcup_{\lambda\in\Lambda}X_\lambda$ into semialgebraic 
sets $X_\lambda$ which is semialgebraically homeomorphic to 
the open simplex $\sigma_{d_\lambda}$, where 
$\sigma_d=\{0<x_1<x_2<\dots x_d<1\}\subset\R^d$ is the $d$-dimensional 
open simplex (note that $\sigma_0$ is the point). 
Then the Euler characteristic $e(X)$ of $X$ is defined as 
$e(X):=\sum_{\lambda\in\Lambda}(-1)^{d_\lambda}$. 
Note that 
if $X$ is compact, then $e(X)$ coincides with the usual Euler characteristic. 
More generally, if $X$ is locally compact, then $e(X)$ is coincides with 
the Euler characteristic of the Borel-Moore homology group \cite{bcr}. 

A poset is called a \emph{semialgebraic poset} if 
its ground set is a semialgebraic set and 
order structure is semialgebraically defined. 
Finite posets and the real line $\R$ are semialgebraic posets. 
The Euler characteristic of a semialgebraic poset is a natural 
generalization of the cardinality of a finite poset. 
For example, for a finite poset $P$ , 
$e(P)$ is equal to the cardinality $|P|$. 
Furthermore, due to the multiplicativity of the Euler characteristic 
and $e(\R) = -1$, for a semialgebraic poset $Q$, we have 
\[
e(-Q) = -e(Q). 
\]

\begin{theorem}
\label{thm:hmy}
\cite{hmy} Let $P$ be a finite poset and $Q$ be a semialgebraic poset. 
Then $\Hom^<(P, Q)$ and $\Hom^{\leq}(P, Q)$ are semialgebraic sets. 
Furthermore, 
\begin{itemize}
\item[(i)] the Euler characteristics of these spaces satisfy 
\begin{eqnarray}
\label{eq:euler01}
e(\Hom^<(P, Q))=(-1)^{|P|}\cdot e(\Hom^{\leq}(P, -Q)), \\
\label{eq:euler02}
e(\Hom^<(P, -Q))=(-1)^{|P|}\cdot e(\Hom^{\leq}(P, Q)). 
\end{eqnarray}
\item[(ii)] 
If furthermore $Q$ is totally ordered, then the 
Euler characteristics of these spaces 
can be expressed using ordered polynomials as follows. 
\[
\begin{split}
e(\Hom^<(P, Q))&=\ord^<(P, e(Q)), \\
e(\Hom^\leq(P, Q))&=\ord^\leq(P, e(Q)). 
\end{split}
\]
\end{itemize}
\end{theorem}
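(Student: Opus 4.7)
The plan is to prove semi-algebraicity first, then (ii), then (i).

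Semi-algebraicity is essentially immediate. Both $\Hom^<(P,Q)$ and $\Hom^\leq(P,Q)$ are intersections inside the semi-algebraic set $Q^{|P|}$ (after embedding $Q$ in some $\R^N$) of finitely many conditions of the form $\{\eta:\eta(p_1)<\eta(p_2)\}$ or $\{\eta:\eta(p_1)\leq\eta(p_2)\}$, one per cover relation of $P$. Each such condition is the pullback of the semi-algebraic order relation on $Q$ along a coordinate projection, and finite intersections of semi-algebraic sets are semi-algebraic.

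For part (ii), with $Q$ totally ordered, I would proceed by induction on $|P|$. Pick a maximal $p_0\in P$ and set $P'=P\setminus\{p_0\}$. The restriction map $\Hom^<(P,Q)\to\Hom^<(P',Q)$, $\eta\mapsto\eta|_{P'}$, has fiber over $\eta'$ equal to the semi-algebraic strict upper set $U(\eta')\subset Q$ consisting of $q$ greater than every $\eta'(p)$ with $p<_P p_0$. Stratifying $\Hom^<(P',Q)$ so that $e(U(\eta'))$ is constant on each stratum (achievable by refining according to which cell of a fixed semi-algebraic cell decomposition of $Q$ contains the relevant maximum) and applying the Fubini-type additivity of Euler characteristic for semi-algebraic maps, one obtains a recursion for $e(\Hom^<(P,Q))$ that, after substituting $e(Q)$ for $n$, matches the standard recursion for $\ord^<(P,n)$ expanded over the choice of $\eta(p_0)$. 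The analogous argument using weak inequalities in the fiber description handles the $\ord^\leq$ identity.

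For part (i), the case $Q$ totally ordered follows at once from (ii) applied to both $Q$ and $-Q$ combined with Stanley's reciprocity (Theorem \ref{thm:sta}(ii)) evaluated at $t=e(Q)$, using $e(-Q)=e(Q)\cdot e(\R)=-e(Q)$. For general semi-algebraic $Q$, I would take a semi-algebraic cell decomposition $Q=\bigsqcup_\lambda Q_\lambda$ refined so that the order restricts to a totally ordered structure on each cell $Q_\lambda$, stratify both $\Hom^<(P,Q)$ and $\Hom^\leq(P,-Q)$ according to the coloring $\phi:P\to\Lambda$ recording which cell contains $\eta(p)$, apply the totally ordered case of (i) on each stratum, and sum via additivity and multiplicativity of the Euler characteristic. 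Identity \eqref{eq:euler02} is then obtained from \eqref{eq:euler01} by substituting $-Q$ for $Q$.

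The main obstacle will be producing the order-compatible cell decomposition in the general case: one needs a finite semi-algebraic stratification each of whose cells is totally ordered by the restricted relation and which interacts well with the lexicographic construction $-Q=Q\times\R$. The bookkeeping in part (ii), matching the semi-algebraic Euler-characteristic recursion term-by-term with the polynomial $\ord^<(P,t)$, is the other technically delicate step.
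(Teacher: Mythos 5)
This theorem is not proved in the present paper: it is stated as an imported result and cited to \cite{hmy}, so there is no ``paper's own proof'' to compare against. Judged on its own terms, your sketch of semi-algebraicity is fine, and the inductive Fubini-type argument for part (ii) is plausible in outline, though the claim that the Euler-characteristic recursion can be matched term-by-term to the polynomial recursion for $\ord^<(P,t)$ is exactly where the real work is and is left unverified.

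There are, however, two genuine problems in your treatment of part (i). First, the reduction of general $Q$ to the totally ordered case is not sound as stated. After stratifying $Q=\bigsqcup_\lambda Q_\lambda$ into totally ordered cells and refining $\Hom^<(P,Q)$ by the coloring $\phi:P\to\Lambda$, the resulting stratum is \emph{not} a Hom-space of the form $\Hom^<(P,Q')$ for a single totally ordered $Q'$: an increasing map $\eta$ sends different elements of $P$ into different cells, and the constraints mix intra-cell inequalities with cross-cell comparability conditions that depend on how distinct cells sit relative to each other in $Q$. Invoking the totally ordered case ``on each stratum'' therefore does not apply, and a substantially more careful decomposition (respecting the order relations between cells, not just within them) is needed. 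Second, your final sentence asserting that (\ref{eq:euler02}) follows from (\ref{eq:euler01}) ``by substituting $-Q$ for $Q$'' is incorrect: that substitution yields a statement about $\Hom^\leq(P,-(-Q))$, and since $-(-Q)=Q\times\R^2\neq Q$, this is not (\ref{eq:euler02}). The paper flags exactly this point in the remark following the theorem, emphasizing that the two identities are not equivalent and must be established separately.
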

Note that Stanley's reciprocity Theorem \ref{thm:sta} 
can be obtained by considering the totally ordered set $Q=[n]$. 
Moreover, 
Theorem \ref{thm:hmy} asserts that the reciprocity 
holds for any finite poset $Q$, 
not necessarily for the poset of the form $Q=[n]$. 

\begin{remark}
Note that, in Theorem \ref{thm:hmy} (i), 
since $-(-Q)\neq Q$, the two formulas 
(\ref{eq:euler01}) and (\ref{eq:euler02}) are not equivalent. 
\end{remark}

This paper is organized as follows. In \S \ref{sec:euler}, 
we discuss the refinement of Theorem \ref{thm:hmy} (i), 
i.e., 
whether the claim of the Theorem follows from the homeomorphism of spaces. 
In \S \ref{sec:main} we formulate the main result. 
In \S \ref{sec:semi} we summarize the properties of upper 
semicontinuous functions needed for the proof, and 
in \S \ref{sec:proof} we give the proof of the main result. 

\section{A refinement of Euler characteristic reciprocity}
\label{sec:euler}

Theorem \ref{thm:hmy} (i) asserts that the Euler characteristics 
of two spaces are equal up to sign factor. 
Let us reformulate these formulas: noting that 
$e(\R^{|P|})=(-1)^{|P|}$, 
the two formulas of Theorem \ref{thm:hmy} (i) 
can be rewritten as: 
\begin{eqnarray}
\label{eq:nonhomeo}
e(\Hom^{\leq}(P, Q\times\R))=e(\Hom^<(P, Q)\times\R^{|P|}), \\
\label{eq:homeo}
e(\Hom^<(P, Q\times\R))=e(\Hom^{\leq}(P, Q)\times \R^{|P|}). 
\end{eqnarray}
It is a natural question to ask whether the equality between 
these Euler characteristics can be refined. 
More precisely, are the spaces in the left-hand sides and 
the right-hand sides homeomorphic?


The main result of this paper is to prove that the second equality 
(\ref{eq:homeo}) 
holds at the level of space, that is, there exists a homeomorphism 
\begin{equation}
\label{eq:homeo02}
\Hom^<(P, Q\times\R)\simeq\Hom^{\leq}(P, Q)\times \R^{|P|}. 
\end{equation}
(See Theorem \ref{thm:main} and Corollary \ref{cor:semialg} 
for the precise statement). 

\begin{remark}
For the first equality (\ref{eq:nonhomeo}), 
the spaces 
$\Hom^{\leq}(P, Q\times\R)$ and $\Hom^<(P, Q)\times\R^{|P|}$ 
are not homeomorphic in general. 
For example, when $P=[2]$ and $Q=[1]$, 
$\Hom^<(P, Q)=\emptyset$, therefore, 
the space in the right-hand side of (\ref{eq:nonhomeo}) 
is empty, while, the left-hand side is non-empty. 

As another example, let us consider the case $P=Q=[2]$. 
Then,  $\Hom^<(P, Q)$ consists of a point and 
$\Hom^<(P, Q)\times\R^{|P|}$ is a connected space. 
On the other hand, 
$\Hom^{\leq}(P, Q\times\R)$ looks like 
Figure \ref{fig:example}, which has three connected components. 
(The figure is drawn using the identification $\R$ with the open interval $(0,1)$.)
\begin{figure}[htbp]
\centering
\begin{tikzpicture}[scale=1.2]


\draw[->] (-0.5,0)--(4.5,0);
\draw[->] (0,-0.5)--(0,4.5);
\draw[very thin] (-0.5,-0.5)--(4.5,4.5);

\filldraw[fill=gray!20!white, draw=black, dashed, very thin] 
(1,3)--(2,3)--(2,4)--(1,4)--cycle;

\filldraw[fill=white, draw=black] (1,3) circle (2pt) ;
\filldraw[fill=white, draw=black] (1,4) circle (2pt) ;
\filldraw[fill=white, draw=black] (2,3) circle (2pt) ;
\filldraw[fill=white, draw=black] (2,4) circle (2pt) ;

\filldraw[fill=gray!20!white, draw=black, dashed, very thin] 
(1,1)--(2,2)--(1,2)--cycle;
\draw[very thick] (1,1)--(2,2);
\filldraw[fill=white, draw=black] (1,1) circle (2pt) ;
\filldraw[fill=white, draw=black] (1,2) circle (2pt) ;
\filldraw[fill=white, draw=black] (2,2) circle (2pt) ;

\filldraw[fill=gray!20!white, draw=black, dashed, very thin] 
(3,3)--(4,4)--(3,4)--cycle;
\draw[very thick] (3,3)--(4,4);
\filldraw[fill=white, draw=black] (3,3) circle (2pt) ;
\filldraw[fill=white, draw=black] (4,4) circle (2pt) ;
\filldraw[fill=white, draw=black] (3,4) circle (2pt) ;

\end{tikzpicture}
\caption{$\Hom^{\leq}([2], [2]\times\R)$.}
\label{fig:example}
\end{figure}
It is a natural problem 
to explore the reasons that lead to the equality (\ref{eq:nonhomeo}) 
of the Euler characteristics even though the spaces are not 
homeomorphic. 
\end{remark}

\section{Metrizable posets and main result}
\label{sec:main}

Let $P$ and $Q$ be posets. 
From the definition of lexicographic order, 
a pair $(\eta, \theta)$ of maps 
$\eta:P\longrightarrow Q$ and $\theta:P\longrightarrow\R$ 
is contained in $\Hom^<(P, Q\times\R)$ if and only if for every 
$p_1, p_2\in P$ with $p_1<p_2$, 
either ``$\eta(p_1)<\eta(p_2)$'' or 
``$\eta(p_1)=\eta(p_2)$ and $\theta(p_1)<\theta(p_2)$'' holds. 
It follows that $\eta\in\Hom^\leq(P, Q)$. 
Thus, we obtain the natural projection 
$\pi:\Hom^<(P, Q\times\R)\longrightarrow\Hom^{\leq}(P, Q)$ 
(also similarly 
$\pi:\Hom^\leq(P, Q\times\R)\longrightarrow\Hom^{\leq}(P, Q)$).  

\begin{definition}
A poset $Q$ is a \emph{metrizable poset} if its ground set is equipped with 
metrizable topology. 
\end{definition}
The main result of this paper is as follows. 
\begin{theorem}
\label{thm:main}
Let $P$ be a finite poset and $Q$ be a metrizable poset. Then there exists 
a homeomorphism $\varphi:\Hom^{<}(P,Q\times \R)\stackrel{\simeq}{\longrightarrow}\Hom^{\leq}(P,Q)\times\R^{|P|}$ which makes the following 
diagram commutative: 
\begin{equation}
\label{diag:main}
  \xymatrix{
    \Hom^{<}(P,Q\times \mathbb{R}) \ar[r]^{\varphi} \ar[d]_{\pi} & \Hom^{\leq}(P,Q)\times\mathbb{R}^{|P|} \ar[d]_{\pi} \\
    \Hom^{\leq}(P,Q) \ar[r]^{id} & \Hom^{\leq}(P,Q). 
  }
\end{equation}
\end{theorem}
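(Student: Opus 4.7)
The plan is to construct the homeomorphism $\varphi$ fibrewise along a linear extension of $P$, using the metric on $Q$ to smoothly interpolate between the ``constrained'' and ``unconstrained'' regimes of each coordinate.

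First I would note that for each fixed $\eta\in\Hom^{\leq}(P,Q)$, the fiber $\pi^{-1}(\eta)$ is the subset of $\R^{|P|}$ cut out by the strict inequalities $\theta(p)<\theta(p')$ for the pairs $p<p'$ in $P$ with $\eta(p)=\eta(p')$. This is a non-empty convex open set in $\R^{|P|}$, hence abstractly homeomorphic to $\R^{|P|}$; the heart of the matter is to make these fibrewise homeomorphisms vary continuously in $\eta$. To organise this, fix a linear extension $p_{1},\ldots,p_{n}$ of $P$ and a metric $d$ on $Q$, and set $\delta_{ji}(\eta):=d(\eta(p_{j}),\eta(p_{i}))$ for each pair $j<i$ with $p_{j}<p_{i}$ in $P$; these are continuous functions on $\Hom^{\leq}(P,Q)$, vanishing exactly where $\eta(p_{j})=\eta(p_{i})$.

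Next I would construct $\varphi=(\varphi_{1},\ldots,\varphi_{n})$ recursively. At step $i$, given $\eta$ and the previous coordinates $\theta(p_{1}),\ldots,\theta(p_{i-1})$, the allowed range of $\theta(p_{i})$ is either $\R$ or a half-line $(M_{i},\infty)$, where $M_{i}=\max\{\theta(p_{j}):j<i,\,p_{j}<p_{i}\text{ in }P,\,\eta(p_{j})=\eta(p_{i})\}$. A naive attempt such as $\varphi_{i}=\log(\theta(p_{i})-M_{i})$ is discontinuous, because $M_{i}$ is only upper semicontinuous in $\eta$; instead, I would define $\varphi_{i}$ through a continuous family of homeomorphisms $\phi_{\delta}:\R\to(\text{allowed range})$ parametrised by a continuous quantity built from the $\delta_{ji}$, interpolating between the logarithmic rescaling (at $\delta=0$) and the identity (at $\delta$ large). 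A prototype in the one-predecessor case is $\phi_{\delta}(v)=\rho(\delta)\bigl(\theta(p_{j})+e^{v}\bigr)+(1-\rho(\delta))\,v$, with $\rho:[0,\infty)\to[0,1]$ continuous, $\rho(0)=1$, and $\rho(\delta)\to 0$ as $\delta\to\infty$: this is a homeomorphism of $\R$ onto $(\theta(p_{j}),\infty)$ at $\delta=0$ and onto $\R$ for $\delta>0$, and it varies continuously with $\delta$. Having defined $\varphi$, I would verify, in order, its continuity, the fibrewise homeomorphism property, continuity of the inverse (obtained by solving the equations $\varphi_{i}=\cdots$ for $\theta(p_{i})$ successively, using monotonicity of each $\phi_{\delta}$ in $v$), and commutativity of (\ref{diag:main}).

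The main obstacle is the case in which $p_{i}$ has several predecessors: the active set $\{j:\eta(p_{j})=\eta(p_{i})\}$ varies upper-semicontinuously with $\eta$, and combining multiple logarithmic barriers into a single continuous family $\phi_{\delta}$ of fibrewise homeomorphisms is delicate. I expect this is exactly what \S\ref{sec:semi} is designed for: one uses the metric $d$ to approximate the upper semicontinuous threshold $M_{i}$ by a continuous function from below in a way that is compatible with the fiber structure, so that each strict inequality can be ``switched on'' smoothly as the corresponding $\delta_{ji}$ passes through $0$. The metrizability assumption on $Q$ is essential precisely here, since it is what provides the continuous witnesses $\delta_{ji}$ of the degeneracy of pairs of values of $\eta$; without it, no such continuous interpolation between the constrained and unconstrained regimes is available.
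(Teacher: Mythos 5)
Your high-level strategy is the same as the paper's: fix a linear extension $p_1,\dots,p_n$, work coordinate-by-coordinate, and at step $i$ recognize that the constraint on $t_i$ is a strict lower bound that is only \emph{upper semicontinuous} in the remaining data. That identification is exactly right. But the mechanism you propose for discharging this upper semicontinuity is where there is a genuine gap, and your guess at what \S\ref{sec:semi} does is in fact inverted.

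The paper isolates the difficulty as a stand-alone fact (Lemma~\ref{lem:upper}): for any upper semicontinuous $f:X\to\R\cup\{-\infty\}$ on a metrizable $X$, the open region $X_f=\{(x,t):t>f(x)\}$ admits a fiber-preserving homeomorphism to $X\times\R$. The proof uses the classical Baire--Bourbaki theorem that on a metrizable space an upper semicontinuous function is the pointwise limit of a \emph{decreasing} sequence of continuous functions $f_1>f_2>\cdots\to f$ --- that is, approximation from \emph{above}, not from below as you suggest. These $f_n$ foliate $X_f$ into horizontal bands $\{f_{i+1}(x)\le t\le f_i(x)\}$ lying entirely inside the region, and one rescales each band affinely onto $[-(i+1),-i]$. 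This is where metrizability enters; it is not used to produce continuous ``witnesses of degeneracy'' $\delta_{ji}$ as in your sketch. Crucially, because the maximum of finitely many upper semicontinuous functions is upper semicontinuous, the paper handles the multi-predecessor case for free: at step $k$ one simply takes $f_k=\max\{f_{1k},\dots,f_{kk}\}$ and applies the lemma once.

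By contrast, your interpolation $\phi_\delta(v)=\rho(\delta)(\theta(p_j)+e^v)+(1-\rho(\delta))v$ does plausibly work when $p_i$ has a single relevant predecessor, but you explicitly leave the multi-predecessor case unresolved, and it is not a routine extension: the ``active set'' of constraints jumps as several $\delta_{ji}$ vanish at different rates, the threshold $M_i$ is a max of values from different coordinates, and one would have to verify that a single continuous family of fiberwise homeomorphisms correctly degenerates along every stratum of the incidence variety $\{\eta(p_j)=\eta(p_i)\}$ simultaneously. That verification is the entire content of the theorem and is not supplied. So while the outer induction is sound and matches the paper, the proof is incomplete precisely at the point where the paper invokes Lemma~\ref{lem:upper}; I would encourage you to replace the ad hoc interpolation by the decreasing continuous approximation of the upper semicontinuous lower bound, which closes the gap uniformly.
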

Before giving the proof, let us discuss special cases of this result. 
\begin{example}
If $Q=[1]$ is the poset with one element, then the result gives a homomorphism 
$\varphi:\Hom^{<}(P, \R)\stackrel{\simeq}{\longrightarrow}\R^{|P|}$. 
Let $P=\{p_1, \dots, p_n\}$. Then $\Hom^{<}(P, \R)$ is expressed as follows. 
\[
\Hom^{<}(P, \R)=\{(t_1, \dots, t_n)\in\R^n\mid t_i<t_j \mbox{ if $p_i<p_j$ in $P$}\}, 
\]
which is a convex open subset of $\R^n$. Hence it is homeomorphic to $\R^n$. 
\end{example}

\begin{example}
Suppose $P=[2]$ and $Q=\R$. Then we have 
\[
\Hom^{<}(P, Q\times\R)=\{((q_1, t_1), (q_2, t_2))\in(\R\times\R)^2\mid q_1<q_2 
\mbox{, or $q_1=q_2$ and $t_1<t_2$}\}. 
\]
Restricting diagram (\ref{diag:main}) to $(q_1, t_1)=(0,0)$, 
Theorem \ref{thm:main} asserts that 
\[
\{(q_2, t_2)\in\R^2\mid q_2>0\mbox{, or $q_2=0, t_2>0$}\}
\]
is homeomorphic to $[0, \infty)\times\R$ (Figure \ref{fig:homeo}). 
\begin{figure}[htbp]
\centering
\begin{tikzpicture}[scale=1]


\fill[fill=gray!20!white] 
(0,0)--(0,4)--(4,4)--(4,0)--cycle;
\draw[very thick] (0,2)--(0,4);
\draw[very thin, dashed] (0,2)--(0,0);
\filldraw[fill=white, draw=black] (0,2) circle (2pt) ;

\fill[fill=gray!20!white] 
(6,0)--(6,4)--(10,4)--(10,0)--cycle;
\draw[very thick] (6,0)--(6,4);

\end{tikzpicture}
\caption{$\{(q_2, t_2)\in\R^2\mid q_2>0\mbox{, or $q_2=0, t_2>0$}\}$ 
and $[0, \infty)\times\R$.}
\label{fig:homeo}
\end{figure}
\end{example}
This example shows that considerations of the upper 
semicontinuous functions are key to the proof of Theorem \ref{thm:main}. 

\section{Upper semicontinuous functions on metrizable spaces}
\label{sec:semi}

Recall that a function $f:X\longrightarrow \R\cup\{-\infty\}$ 
on a topological space $X$ is said to be \emph{upper semicontinuous} if 
for every $\alpha\in\R$, $f^{-1}([-\infty, \alpha))\subset X$ is open. 

\begin{lemma}
\label{lem:upper}
For a function $f:X\longrightarrow \R\cup\{-\infty\}$, let 
$X_f:=\{(x, t)\in X\times\R\mid t>f(x)\}$. If $X$ is metrizable and 
$f$ is upper semicontinuous, then there exists a homeomorphism 
$\varphi: X_f\longrightarrow X\times\R$ 
that makes the following diagram commutative 
\begin{equation}
  \xymatrix{
    X_f \ar[r]^{\varphi} \ar[d]_{\pi} & X\times\R \ar[d]_{\pi} \\
    X \ar[r]^{id} & X. 
  }
\end{equation}
\end{lemma}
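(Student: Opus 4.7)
The plan is to construct $\varphi$ in the special form $\varphi(x,t)=(x,\psi(x,t))$, which makes the diagram commute automatically and reduces the problem to producing a continuous map $\psi\colon X_f\to\R$ whose restriction to each fiber $\{x\}\times(f(x),\infty)$ is a homeomorphism onto $\{x\}\times\R$.

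The central ingredient will be the classical approximation theorem for metrizable spaces: for any compatible metric $d$ on $X$, every upper semicontinuous $f\colon X\to\R\cup\{-\infty\}$ is the pointwise limit of a strictly decreasing sequence $f_1>f_2>\cdots$ of continuous real-valued functions on $X$, with $f_n(x)\to-\infty$ whenever $f(x)=-\infty$. For bounded $f$ the Moreau--Yosida approximations $f_n(x)=\sup_{y\in X}\bigl[f(y)-n\cdot d(x,y)\bigr]$ realize this, and a truncation in the target handles the unbounded case. I will then use the graphs of these $f_n$ as continuously varying ``tick marks'' parametrizing the fibers of $\pi\colon X_f\to X$.

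Given such a sequence, define $\psi$ piecewise as follows. On the region $\{t\ge f_1(x)\}$ set $\psi(x,t)=t-f_1(x)$. On each closed strip $\{f_{n+1}(x)\le t\le f_n(x)\}$ with $n\ge 1$, let $\psi(x,t)$ be the linear interpolation in $t$ that takes the value $-(n-1)$ at $t=f_n(x)$ and the value $-n$ at $t=f_{n+1}(x)$, i.e.
\[
\psi(x,t)=-n+\frac{t-f_{n+1}(x)}{f_n(x)-f_{n+1}(x)}.
\]
On each fiber, $\psi(x,\cdot)$ is then a monotone continuous bijection from $(f(x),\infty)$ to $\R$, sending $t\searrow f(x)$ to $-\infty$ because a point with $t$ close to $f(x)$ lies in a strip of high index $n$. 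Continuity of $\psi$ on the interior of each strip is immediate from the formula, and the two interpolation formulas agree at every shared boundary $t=f_n(x)$, giving the common value $-(n-1)$. An entirely analogous strip-by-strip formula defines the fiberwise inverse $\psi^{-1}(x,s)$ (linearly interpolating between $f_{n+1}(x)$ and $f_n(x)$ for $s\in[-n,-(n-1)]$, and setting $\psi^{-1}(x,s)=s+f_1(x)$ for $s\ge 0$), and its continuity is checked in the same way; hence $\varphi=(\mathrm{id},\psi)$ is a homeomorphism.

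The main obstacle is the approximation step: producing the decreasing sequence of continuous functions that converges pointwise to a merely upper semicontinuous $f$ is exactly where metrizability of $X$ enters in an essential way, and without it the statement of the lemma is false in general. Once such a sequence is in hand, the construction and the verification of the homeomorphism properties are routine piecewise-linear bookkeeping, done fiber by fiber and strip by strip.
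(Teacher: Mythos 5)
Your proof takes essentially the same approach as the paper: approximate the upper semicontinuous $f$ by a strictly decreasing sequence $(f_n)$ of continuous functions (the same classical Bourbaki-type result, which is exactly where metrizability enters), then define the fiberwise homeomorphism by piecewise linear interpolation on the strips $\{f_{n+1}(x)\le t\le f_n(x)\}$, with the matching formulas at the boundaries. Your version is a cleaner rendering of the paper's (which has a small $f_0$ vs.\ $f_1$ indexing typo), and the added remarks on Moreau--Yosida approximations and on checking continuity of $\psi$ and $\psi^{-1}$ are sound but do not change the method.
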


\begin{proof}
It is classically known (\cite[Chapter 9, \S 2]{bourbaki}) that 
there exists a sequence $f_n:X\longrightarrow \R\cup\{-\infty\}, (n\geq 1)$ 
of continuous functions such that 
\begin{itemize}
\item for each $x\in X$, $f_1(x)>f_2(x)>\dots>f_n(x)>\dots$, and 
\item $\lim_{n\to\infty}f_n(x)=f(x)$. 
\end{itemize}
Then, define $\varphi:X_f\longrightarrow X\times \R$ as 
\[
\varphi ( x , t ) = 
\begin{cases}
(x, t - f_0(x)) & (t\geq f_0(x) ), \\
(x,-(i+1) + \dfrac{t - f_{i+1}(x)}{f_i(x) - f_{i+1}(x)} & (f_i(x)\geq t \geq f_{i+1}(x)). 
\end{cases}
\]
This $\varphi$ gives a desired homeomorphism. 
\end{proof}

\section{Proof of the main result}
\label{sec:proof}

We give the proof of Theorem \ref{thm:main} in this section. 
We fix a numbering $P=\{p_1, \dots, p_n\}$ 
in such a way that $1\leq i<j\leq n$ implies $p_j\not\leq p_i$. 
Such a numbering can be obtained, for example, by letting 
$p_1$ be a minimal element of $P$ and 
$p_i$ be a minimal element of $P\smallsetminus\{p_1, \dots, p_{i-1}\}$ 
for $i>1$. 

For $1\leq k\leq n$, let us define the subset 
$X_k\subset\Hom^{\leq}(P, Q)\times\R^n$ as follows. 
\[
\begin{split}
X_k:=
\{
(q_1, \dots, q_n, t_1, \dots, t_n)\in\Hom^{\leq}(P, Q)\times\R^n\mid 
&\mbox{For $1\leq \forall i<\forall j\leq k$}, \\
&\mbox{if $p_i< p_j$ and $q_i=q_j$, then $t_i<t_j$}
\}, 
\end{split}
\]
where $(q_1, \dots, q_n)=(\eta(p_1), \dots, \eta(p_n))$ 
for $\eta\in\Hom^{\leq}(P, Q)$ and 
$t_i\in\R$. 
Note that $X_1=\Hom^{\leq}(P, Q)\times\R^n$ and 
$X_n=\Hom^{<}(P, Q\times\R)$. 

Let $1\leq k\leq n-1$. Define the map 
$\pi_k:X_k\longrightarrow\Hom^{\leq}(P, Q)\times\R^{n-1}$ by 
$\pi_k(q_1, \dots, q_n, t_1, \dots, t_n)\longmapsto
(q_1, \dots, q_n, t_1, \dots, t_k, t_{k+2}, \dots, t_n)$, and 
$Y_k:=\pi_k(X_k)$. It follows from the definition that $X_k=Y_k\times\R$. 
Next, for $1\leq j\leq k\leq n-1$, define the function 
$f_{jk}:Y_k\longmapsto\R\cup\{-\infty\}$ as follows. 
\[
f_{jk}(q_1,\ldots,q_n,t_1,\ldots,t_k,t_{k+2},\ldots,t_n) = 
\begin{cases} 
-\infty &
(p_j \nleq p_{k+1} \mbox{ or } q_j < q_{k+1}), \\
t_j &
(p_j \leq p_{k+1} \mbox{ and } q_j = q_{k+1}). 
\end{cases}
\]
Then $f_{jk}$ is an upper semicontinuous function. 
In fact, when $p_j \nleq p_{k+1}$, $f_{jk}$ is upper semicontinuous 
because it is a constant function. 
When $p_j \nleq p_{k+1}$, we need to verify $f_{jk}^{-1}([-\infty, \alpha))$ 
is open for $\forall\alpha\in\R$. Indeed, we have 
\[
\begin{split}
f_{jk}^{-1}([-\infty, \alpha))=
&
\{(q_1, \dots, q_n, t_1, \dots, t_k, t_{k+1}, \dots, t_n)\in Y_k\mid 
q_j\neq q_{k+1}\}\\
&
\cup
\{(q_1, \dots, q_n, t_1, \dots, t_k, t_{k+1}, \dots, t_n)\in Y_k\mid t_j<\alpha\}. 
\end{split}
\]
The first set is open because $Q$ is Hausdorff. The second set is 
clearly open. 

Now we consider the function $f_k:=\max\{f_{1k}, \dots, f_{kk}\}$ on $Y_k$. 
Since the maximum of finitely many upper semicontinuous functions is 
upper semicontinuous, $f_k$ is upper semicontinuous. 
By Lemma \ref{lem:upper}, there exists a homeomorphism $\varphi_i$ 
that makes the following diagram commutative. 
\[
\xymatrix{
Y_k\times \mathbb{R} \ar[r]^{\varphi_k} \ar[d]_{\pi} & 
{Y_k}_{f_k} \ar[d]_{\pi} \\
Y_k \ar[r]^{id} & Y_k, 
}
\]
where ${Y_k}_{f_k}=\{(y, t_{k+1})\in Y_k\times\R\mid t_{k+1}>f_k(y)\}$. 
Furthermore, by definition, we have ${Y_{k}}_{f_k}=X_{k+1}$. 
Hence there exists a homeomorphism $X_k\simeq X_{k+1}$ which commutes 
with the projection to $\Hom^{\leq}(P, Q)$. In particular, we have 
$\Hom^{\leq}(P, Q)\times\R^n=X_1\simeq\dots\simeq X_n=\Hom^<(P, -Q)$. 
This completes the proof of Thoerem \ref{thm:main}. 

Since a semialgebraic set is metrizable, we have the following. 
\begin{corollary}
\label{cor:semialg}
Let $P$ be a finite poset and $Q$ be a semialgebraic poset. Then 
$\Hom^{\leq}(P, Q)\times\R^{|P|}$ and $\Hom^<(P, -Q)$ are homeomorphism. 
\end{corollary}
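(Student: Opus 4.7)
The plan is to derive Corollary \ref{cor:semialg} as an immediate consequence of Theorem \ref{thm:main}, which has already been established. The strategy consists of two simple steps: verifying that the ``metrizable poset'' hypothesis of Theorem \ref{thm:main} is automatically satisfied by any semialgebraic poset, and then translating the conclusion through the convention $-Q := Q \times \R$ from the Introduction.

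First, I would check that a semialgebraic poset $Q$ is automatically metrizable. By definition, its ground set is a semialgebraic subset of some $\R^N$, equipped with the subspace topology. The restriction of the Euclidean metric on $\R^N$ is a compatible metric for this topology, so $Q$ is metrizable and therefore qualifies as a metrizable poset in the sense of the definition in \S \ref{sec:main}. Applying Theorem \ref{thm:main} to this $P$ and $Q$ then yields a homeomorphism
\[
\Hom^{<}(P, Q \times \R) \;\simeq\; \Hom^{\leq}(P, Q) \times \R^{|P|}.
\]

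Second, I would rewrite the left-hand side. By the convention $-Q := Q \times \R$ (with lexicographic order) introduced in the Introduction, we have the identification $\Hom^{<}(P, -Q) = \Hom^{<}(P, Q \times \R)$, not just as sets but as topological spaces, since both inherit the same subspace topology from $(Q \times \R)^{P}$. Substituting this into the displayed homeomorphism produces $\Hom^{<}(P, -Q) \simeq \Hom^{\leq}(P, Q) \times \R^{|P|}$, which is exactly the claim of the Corollary. There is no genuine obstacle here; the Corollary is a repackaging of Theorem \ref{thm:main} in the semialgebraic language that is the natural setting for the Euler-characteristic reciprocity of Theorem \ref{thm:hmy}.
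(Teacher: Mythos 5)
Your argument is exactly the paper's: the corollary follows immediately from Theorem \ref{thm:main} once one observes that a semialgebraic set, being a subspace of some $\R^N$, inherits the Euclidean metric and is therefore metrizable, and then unwinds the convention $-Q = Q\times\R$. No differences of substance.
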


\begin{remark}
\label{rem:homeo}
It is known that the Euler characteristics of homeomorphic 
semialgebraic sets coincide (\cite{beke}). Therefore, 
the equality of Euler characteristics 
(\ref{eq:euler02}) in Theorem \ref{thm:hmy} 
can be obtained from Corollary \ref{cor:semialg}. 
However, it is not clear whether 
$\Hom^{\leq}(P, Q)\times\R^{|P|}$ and $\Hom^<(P, -Q)$ are 
semialgebraically homeomorphic or not, because in Lemma \ref{lem:upper}, 
we use maps that are not semialgebraic. 
Note that there exist two semialgebraic sets that are homeomorphic, 
but not semialgebraically homeomorphic (\cite{sy-tri}). 
%
\end{remark}

\medskip

\noindent
{\bf Acknowledgements.} 
Masahiko Yoshinaga 
was partially supported by JSPS KAKENHI 
Grant Numbers JP19K21826, JP18H01115. 

\end{document}